\newcommand{\supp}{\mathop{\mathrm{supp}}}
\providecommand{\R}{\mathbb{R}}
\newcommand{\e}{\varepsilon}
\newcommand{\step}[1]{\medskip\noindent\textbf{Step #1. }}
\newcommand{\substep}[1]{\medskip\noindent\textit{Substep #1. }}
\newcommand{\ignore}[1]{}
\newtheorem{definition}{Definition}
\newtheorem{theorem}{Theorem}
\newtheorem{remark}{Remark}
\newtheorem{lemma}{Lemma}
\newtheorem{assumption}{Assumption}
\author[J.\ Hirsch]{Jonas Hirsch}
\address{Mathematisches Institut 
 Universit\"at Leipzig\\
 Augustusplatz 10\\ 04103 Leipzig, Germany.}
\email{jonas.hirsch@math.uni-leipzig.de}
\author[M.\ Sch\"affner]{Mathias Sch\"affner}
\address{Max-Planck-Institut f\"ur Mathematik in den Naturwissenschaften\\
 Inselstra\ss e 22\\ 04103 Leipzig, Germany.}
\email{mathias.schaeffner@mis.mpg.de}
\title[Growth conditions and regularity, an optimal local boundedness result]{Growth conditions and regularity, an optimal local boundedness result} 
\begin{document}
\maketitle


\begin{abstract}
We prove local boundedness of local minimizers of scalar integral functionals $\int_\Omega f(x,\nabla u(x))\,dx$, $\Omega\subset\R^n$ where the integrand satisfies $(p,q)$-growth of the form
\begin{equation*}
|z|^p\lesssim f(x,z)\lesssim |z|^q+1
\end{equation*}
under the optimal relation $\frac1p-\frac1q\leq \frac1{n-1}$.
\end{abstract}

\section{Introduction and main result}

In this note, we establish a sharp local boundedness result for local minimizers of integral functionals 
\begin{equation}\label{eq:int}
\mathcal F[u,\Omega]:=\int_\Omega f(x,\nabla u)\,dx,
\end{equation}
where $\Omega\subset\R^n$, $n\geq2$, is a bounded domain and the integrand $f(x,\nabla u)$ satisfies $(p,q)$-growth of the form 
\begin{equation}\label{pqsim}
|z|^p\lesssim f(x,z)\lesssim |z|^q+1,
\end{equation}
see Assumption~\ref{ass} below. Local boundedness and H\"older continuity of local minimizer of \eqref{eq:int} in the case $1<p=q$ are classical, see the original reference \cite{GG82} or the textbook \cite{Giu}. Giaquinta \cite{G87} provided an example of an autonomous convex integrand satisfying \eqref{pqsim} with $p=2$ and $q=4$ that admits unbounded minimizer in dimension $n\geq6$. Similar examples can be found in \cite{Mar91,H92}, in particular it follows from  \cite[Section~6]{Mar91} that if
\begin{equation}\label{pq:unbounded}
q>\frac{(n-1)p}{n-1-p}=:p_{n-1}^*\quad\mbox{and}\quad 1<p<n-1,
\end{equation}
then one cannot expect local boundedness for minimizers of \eqref{eq:int} in general. In this paper we show that condition \eqref{pq:unbounded} is sharp. Before we state our main result, we recall a standard notion of local minimizer and quasi-minimizer in the context of functionals with $(p,q)$-growth
\begin{definition}
Given $Q\geq1$, we call $u\in W_{\rm loc}^{1,1}(\Omega)$ a \emph{$Q$-minimizer} of \eqref{eq:int} if and only if
\begin{equation*}
 \mathcal F(u,{\rm supp}\,\varphi)<\infty\quad\mbox{and}\quad \mathcal F(u,{\rm supp}\,\varphi)\leq Q\mathcal F(u+\varphi,{\rm supp}\,\varphi)
\end{equation*}
for any $\varphi\in W^{1,1}(\Omega)$ satisfying ${\rm supp}\;\varphi\Subset \Omega$. If $Q=1$, then $u$ is a \emph{local minimizer} of \eqref{eq:int}. Moreover, we call $u$ a \emph{quasi-minimizer} if and only if there exists $Q\geq1$ such that $u$ is a $Q$-minimizer. 
\end{definition}
%
%
\begin{assumption}\label{ass} Let $f:\Omega\times \R^n\to\R$ be a  Caratheodory function and suppose that $z\mapsto f(x,z)$ is convex for almost every $x\in\Omega$. Moreover, there exist $1\leq L<\infty$, $\mu\geq0$ such that for all $z\in \R^n$ and almost every $x\in\Omega$
\begin{align}
|z|^p\leq f(x,z)\leq L|z|^q+1,&\label{ass1}\\
f(x,2z)\leq \mu+Lf(x,z),\label{ass2}
\end{align}
\end{assumption}
Now we are in position to state the main result of the present paper 
\begin{theorem}\label{T}
Let $\Omega\subset \R^n$, $n\geq2$ and suppose Assumption~\ref{ass} is satisfied with $1\leq p\leq q<\infty$ such that
\begin{equation}\label{eq:assq:intro}
 \frac1q\geq \frac1p-\frac1{n-1}.
\end{equation}
Let $u\in W_{\rm loc}^{1,1}(\Omega)$ be a quasi-minimizer of the functional $\mathcal F$ given in \eqref{eq:int}. Then, $u\in L_{\rm loc}^{\infty}(\Omega)$. 
\end{theorem}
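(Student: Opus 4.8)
The plan is to run a Caccioppoli-type inequality coupled with a Moser/De Giorgi iteration, but carefully tracking how the gap between $p$ and $q$ interacts with the Sobolev exponent $p^*_{n-1}$ rather than $p^* = \frac{np}{n-p}$. The key structural observation is that the quasi-minimality together with \eqref{ass1} and \eqref{ass2} yields, for any convex Lipschitz truncation-type test function, a Caccioppoli inequality of the form
\begin{equation*}
\int_{B_\rho} |\nabla u|^p \, \eta^q \, \mathbf{1}_{\{u > k\}} \lesssim \int_{B_R} \left( \frac{|u-k|}{R-\rho} \right)^q \mathbf{1}_{\{u>k\}} + |\{u>k\}\cap B_R|,
\end{equation*}
for superlevel sets $\{u>k\}$, where the mismatch between the $p$ on the left and the $q$ on the right is exactly the source of difficulty. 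First I would fix notation for superlevel sets $A_k := \{x : u(x) > k\}$ and the quantities $a(k,\rho) := |A_k \cap B_\rho|$, and derive this Caccioppoli inequality by testing the $Q$-minimality with $\varphi = -\eta^q (u-k)_+$ (or a suitable power), using convexity of $f(x,\cdot)$ to absorb the gradient term and \eqref{ass2} to handle the doubling.

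\textbf{The iteration.} The heart of the argument is to convert the Caccioppoli inequality into a decay estimate for $a(k,\rho)$. Here I would exploit the dimension reduction implicit in $p^*_{n-1}$: the exponent $\frac1p - \frac1q \le \frac{1}{n-1}$ is precisely what is needed so that, after applying the Sobolev inequality on $(n-1)$-dimensional slices (or equivalently using the anisotropic/slicing Sobolev embedding of Troisi type, or the embedding $W^{1,p} \hookrightarrow L^{p^*_{n-1}}$ restricted to level sets), the $L^q$ norm of $(u-k)_+$ on the right can be reabsorbed into a slightly higher power on the left. Concretely, one wants an inequality of the shape
\begin{equation*}
(h-k)^{q} a(h,\rho')^{\gamma} \le \frac{C}{(\rho - \rho')^{q}} a(k,\rho)^{1+\delta}
\end{equation*}
for $h > k$ and $\rho' < \rho$, with $\delta > 0$; the standard lemma of De Giorgi (see \cite{Giu}) then gives $u \le k_0 + d$ on $B_{R/2}$ for suitable $k_0, d$ depending on $\|u\|_{L^{p}(B_R)}$ (or $L^1$), yielding $u \in L^\infty_{\rm loc}$. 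The one-sided bound applied to both $u$ and $-u$ (note $-u$ is a quasi-minimizer of $f(x,-z)$, which satisfies the same assumptions) finishes the proof.

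\textbf{Main obstacle.} The delicate point is the interpolation/slicing step that produces the exponent $\delta > 0$ under the borderline condition $\frac1p - \frac1q = \frac{1}{n-1}$: one must be sure there is genuinely a gain and not merely an equality, which typically requires combining the Sobolev inequality with the measure term $|A_k \cap B_R|$ and using that $a(k,\rho) \to 0$ as $k \to \infty$, or iterating with a logarithmically-weighted test function à la Moser rather than power truncations. A secondary subtlety is that $u$ is only assumed $W^{1,1}_{\rm loc}$ with $f(x,\nabla u)$ integrable on supports of test functions, so $\nabla u \in L^p_{\rm loc}$ is available but no better; hence every test function must be checked to lie in $W^{1,1}$ with compact support and to keep $\mathcal F(u+\varphi, \cdot)$ finite, which is where \eqref{ass1}–\eqref{ass2} and the truncation structure are essential. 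I expect the proof to proceed by first establishing the result for bounded minimizers (a priori estimate) and then removing the boundedness assumption by an approximation of $f$ from below by integrands with standard $p$-growth, passing to the limit in the estimate.
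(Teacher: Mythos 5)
Your high-level scaffolding is right --- Caccioppoli plus De~Giorgi iteration, and you correctly guess that the gain from $\tfrac1n$ to $\tfrac1{n-1}$ must come from a Sobolev inequality ``in one dimension less.'' However, the proposal is missing the one technical idea that actually makes this accessible, and without it the argument cannot close. The standard Caccioppoli inequality you write down (with a generic affine-in-$|x|$ cut-off $\eta$) produces a \emph{ball} integral $\int_{B_\sigma}(u-k)_+^q$ on the right-hand side, and a ball integral can only be estimated via $W^{1,p}(B_\sigma)\hookrightarrow L^{p^*_n}(B_\sigma)$, giving exactly the old condition $\tfrac1q\ge\tfrac1p-\tfrac1n$. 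Invoking ``Sobolev on $(n-1)$-dimensional slices'' or ``restricted to level sets'' does not help here, because you do not have a slice integral to apply it to. The paper's key device (Lemma~\ref{L:optimcutoff}) is to \emph{optimize over radial cut-offs depending on $u$ itself}: this converts the bad term into
\begin{equation*}
\inf_\eta\int_{A_{k,\sigma}}|\nabla\eta|^q(u-k)_+^q\ \lesssim\ (\sigma-\rho)^{-\gamma}\biggl(\int_\rho^\sigma\Bigl(\int_{S_r}(u-k)_+^q\Bigr)^{q_*/q}\,dr\biggr)^{q/q_*},
\end{equation*}
i.e.\ a nested radial integral of \emph{sphere} integrals. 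Only then can one apply the Sobolev inequality on each sphere $S_r$ with exponent $q_*$ satisfying $\tfrac1{q_*}=\min\{\tfrac1q+\tfrac1{n-1},1\}$, integrate in $r$, and use $q_*\le p$ (equivalent to \eqref{eq:assq:intro}) to land back in $W^{1,p}$. This is the step your proposal genuinely lacks, and it is not something that can be supplied by a generic ``slicing'' or Troisi-type embedding, which address a different (anisotropic) structure.

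Two smaller points. For the borderline case $\tfrac1q=\tfrac1p-\tfrac1{n-1}$ you suggest a logarithmic Moser variant or that the measure term supplies an extra $\delta>0$; in the paper there is \emph{no} $\delta$-gain at the endpoint --- instead one chooses $k_0$ large enough that $J_0=\|(u-k_0)_+\|_{W^{1,p}(B_1)}^p$ is small (using $\nabla u\in L^p_{\rm loc}$), which makes the iteration contract without a geometric gain. And for removing the (unavailable) a~priori boundedness, the paper does not approximate $f$ from below; it simply tests with the truncation $u_M=\min\{u,M\}$, uses \eqref{ass2} to control the doubling on $\{u>M\}$, and lets $M\to\infty$ --- the extra term vanishes because $f(\cdot,\nabla u)\in L^1_{\rm loc}$. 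Both of these are details one could in principle fix within your framework, but the optimized cut-off is a genuine missing ingredient.
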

\begin{remark}
 We provide the proof of Theorem~\ref{T} in Section~\ref{sec:proof}. We establish a slightly more general results in which the growth condition \eqref{ass1} is replaced by
 $$
 |z|^p-g(x)^\frac{p}{p-1}\leq f(x,z)\leq L(|z|^q+g(x)^\frac{p}{p-1}),
 $$ 
 and optimal assumptions (in the Lorentz-scale) on $g$ are imposed.
\end{remark}
Let us now relate Theorem~\ref{T} to previous results in the literature. To the best of our knowledge, the best previously known relation between $p$ and $q$ that ensures local boundedness under Assumption~\ref{ass} can be found in the paper by Fusco \& Sbordone \cite{FS90} and reads
\begin{equation}\label{pqwrong}
\frac1q\geq\frac1p-\frac1n,
\end{equation}
see \cite[Theorem~2]{FS90} (see also the more recent result \cite[Theorem~2.3]{CMM15}) which also implies local boundedness with condition \eqref{pqwrong}). Obviously, relation \eqref{eq:assq:intro} is less restrictive than \eqref{pqwrong} and in view of the discussion above optimal for local boundedness (compare \eqref{eq:assq:intro} and \eqref{pq:unbounded}). However, we want to emphasize that \cite{CMM15,FS90} (and similarly \cite{BMS90,FS93}) contain sharp local boundedness results under additional structural assumptions on the growth of $f$, namely \textit{anisotropic growth} of the form
\begin{equation}\label{growth:aniso}
\sum_{i=1}^n|z_i|^{p_i}\lesssim f(x,z)\lesssim \sum_{i=1}^nM(1+|z_i|^{p_i}).
\end{equation}
In this case, local boundedness is proven under the condition $q\leq \overline p^*$, where $\frac1{\overline p}=\frac1n\sum_{i=1}^n\frac1{p_i}$ and $\overline p^*=\frac{n \overline p}{n-\overline p}$. This condition is optimal for local boundedness in view of the above mentioned counterexamples (the integrands in \cite{G87,Mar91,H92} satisfy growth of the form \eqref{growth:aniso}).

\bigskip

The systematic study of higher regularity of minimizers of functionals with $(p,q)$-growth was initiated by Marcellini \cite{Mar89,Mar91}. By now there is a large and quickly growing literature on regularity results for minimizers of functionals with $(p,q)$-growth, and more general non-standard growth \cite{L93,Mar93}. We refer to \cite{Min06} for an overview. A currently quite active field of research is the regularity theory for so-called double phase problems where the model functional is given by
\begin{equation}\label{int:doublephase}
\int_\Omega |\nabla u(x)|^p+a(x)|\nabla u(x)|^q\,dx,
\end{equation}
where $0\leq a\in C^{0,\alpha}$ with $\alpha\in(0,1]$ see e.g.\ \cite{BCM18,CM15,CM15b,DM19,ELM04}) and \cite{Zhikov,JKO94} for some motivation for functionals of the form \eqref{int:doublephase}. For this kind of functionals rather sharp conditions for higher ($C^{1,\beta}$-) regularity are known, where $\alpha$ has to be balanced with $p,q$, and $n$. In \cite{CM15b} it was observed that the conditions on the data can be relaxed if one a priori knows that the minimizer is bounded. Obviously by Theorem~\ref{T} the results of \cite{CM15b} can be applied without any a priori assumption whenever $\frac1q\geq\frac1p-\frac1n$ and in particular can be used to improve \cite[Theorem~5.3]{CM15b}.  Similarly, Theorem~\ref{T} improves the applicability of some results in \cite{BB18,CKP11} where also higher regularity results are proven \textit{assuming} a priori boundedness of the minimizer. 

\smallskip

Let us very briefly explain the strategy of the proof of Theorem~\ref{T} and the origin of our improvement. In principle, we use a variation of the De-Giorgi type iteration similar to e.g.\cite{FS90,FS93,CMM15}. Recall that De-Giorgi iteration is based on a Caccioppoli inequality (which yield a reverse Poincar\'e inequality) and Sobolev inequality. The main new ingredient here is to use in the Caccioppoli inequality cut-off functions that are optimized with respect to the minimizer $u$ (instead of using affine cut-offs). This enables us to use Sobolev inequality on ($(n-1)$-dimensional) spheres instead of ($n$-dimensional) balls and thus get the desired improvement. This idea, combined with a variation of Moser-iteration, was recently used by the second author and Bella in the analysis of linear non-uniformly elliptic equations \cite{BS19a} (improving in an essentially optimal way classic results of Trudinger~\cite{T71}) and for higher regularity for integral functionals with $(p,q)$-growth \cite{BS19c} (see also \cite{BS19b} for an application in stochastic homogenization).  

%

\smallskip

The paper is organized as follows: In Section~\ref{sec:prelim}, we recall some definitions and useful results regarding Lorentz spaces and present a technical lemma which is used to derive an improved version of Caccioppoli inequality which plays a prominent role in the proof of Theorem~\ref{T}. In Section~\ref{sec:proof}, we prove a slightly more general version of Theorem~\ref{T} which in particular includes some a~priori estimates. 

\section{Preliminary results}\label{sec:prelim}
\subsection{Preliminary lemmata}

A key ingredient in the prove of Theorem~\ref{T} is the following lemma which is a variation of \cite[Lemma 3]{BS19c}
\begin{lemma}\label{L:optimcutoff}
Fix $n\geq2$. For given $0<\rho<\sigma<\infty$, $v\in L^1(B_\sigma)$ and $s>1$, we consider consider
\begin{equation*}
 J(\rho,\sigma,v):=\inf\left\{\int_{B_\sigma}|v||\nabla \eta|^s\,dx \;|\;\eta\in C_0^1(B_\sigma),\,\eta\geq0,\,\eta=1\mbox{ in $B_\rho$}\right\}.
\end{equation*}
Then for every $\delta\in(0,1]$ 
\begin{equation}\label{1dmin}
 J(\rho,\sigma,v)\leq  (\sigma-\rho)^{-(s-1+\frac1\delta)} \biggl(\int_{\rho}^\sigma \left(\int_{S_r} |v|\right)^\delta\,dr\biggr)^\frac1\delta.
\end{equation}
\end{lemma}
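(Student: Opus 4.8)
The idea is to construct an explicit admissible competitor $\eta$ that is radial on the annulus $B_\sigma\setminus B_\rho$ and then estimate the integral. The natural ansatz is $\eta(x)=\phi(|x|)$ with $\phi(\rho)=1$, $\phi(\sigma)=0$, $\phi$ smooth and nonincreasing, so that $|\nabla\eta(x)|=|\phi'(|x|)|$. Writing the integral in polar coordinates,
\begin{equation*}
 \int_{B_\sigma}|v||\nabla\eta|^s\,dx=\int_\rho^\sigma |\phi'(r)|^s\left(\int_{S_r}|v|\,d\mathcal H^{n-1}\right)dr=\int_\rho^\sigma |\phi'(r)|^s\, m(r)\,dr,
\end{equation*}
where I abbreviate $m(r):=\int_{S_r}|v|$. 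So $J(\rho,\sigma,v)$ is bounded by the one-dimensional minimization problem $\inf\{\int_\rho^\sigma|\phi'|^s m\,dr: \phi(\rho)=1,\phi(\sigma)=0\}$, which explains the name of the lemma and the ``$1d$min'' label.

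\textbf{Solving the 1d problem.} The constraint is $\int_\rho^\sigma(-\phi'(r))\,dr=1$ with $-\phi'\geq0$. Minimizing $\int|\phi'|^s m$ over such $\phi$ is a textbook calculus-of-variations / Lagrange problem: by Hölder's inequality with exponents $s$ and $s'=\frac{s}{s-1}$,
\begin{equation*}
 1=\int_\rho^\sigma(-\phi')\,dr=\int_\rho^\sigma (-\phi') m^{1/s} m^{-1/s}\,dr\leq\left(\int_\rho^\sigma|\phi'|^s m\,dr\right)^{1/s}\left(\int_\rho^\sigma m^{-1/(s-1)}\,dr\right)^{(s-1)/s},
\end{equation*}
hence $\int_\rho^\sigma|\phi'|^s m\,dr\geq\left(\int_\rho^\sigma m^{-\frac1{s-1}}\,dr\right)^{-(s-1)}$, with equality for the optimal $\phi$ (so that in fact $J\le$ this quantity). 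If $m$ could vanish on a set of positive measure this bound would be $0$ and there is nothing to prove; otherwise one picks $-\phi'(r)$ proportional to $m(r)^{-1/(s-1)}$, normalized, and mollifies slightly to land in $C^1_0$. This already gives a clean bound, but it is \emph{not} yet in the stated form \eqref{1dmin}, which has a free parameter $\delta\in(0,1]$ and $\int_\rho^\sigma m^\delta\,dr$ rather than $\int_\rho^\sigma m^{-1/(s-1)}\,dr$.

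\textbf{Passing to the stated form.} The remaining step is to bound $\left(\int_\rho^\sigma m^{-\frac1{s-1}}\,dr\right)^{-(s-1)}$ by $(\sigma-\rho)^{-(s-1+\frac1\delta)}\left(\int_\rho^\sigma m^\delta\,dr\right)^{1/\delta}$. This is a reverse-type inequality trading a negative power of $m$ for a positive power, and it should follow from Jensen's / Hölder's inequality applied on the interval $(\rho,\sigma)$ of length $\ell:=\sigma-\rho$: the function $t\mapsto t^{-1/(s-1)}$ is convex, so $\fint_\rho^\sigma m^{-1/(s-1)}\,dr\geq\left(\fint_\rho^\sigma m^{a}\,dr\right)^{-1/((s-1)a)}$ for a suitable exponent, and choosing $a=\delta$ and unwinding the averaging factors $\ell^{\pm1}$ produces exactly the exponent $s-1+\frac1\delta$ on $(\sigma-\rho)$. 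One must be a little careful because $t\mapsto t^{-1/(s-1)}$ and $t\mapsto t^\delta$ for $\delta\le 1$ are both concave/convex in the ``right'' directions; I expect this bookkeeping with the exponents — making sure the powers of $\sigma-\rho$ come out to be $-(s-1+\frac1\delta)$ — to be the only delicate point, and the main (minor) obstacle. A fallback, if the direct Jensen step is awkward, is to prove \eqref{1dmin} directly from the Hölder inequality above by choosing in the competitor $-\phi'(r)\propto m(r)^{-1/(s-1)}$ replaced by a regularized quantity, or simply by applying Hölder once more to $\int_\rho^\sigma m^{-1/(s-1)}\,dr$ with exponents tuned to $\delta$. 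Finally one notes that the infimum $J$ need only be bounded above, so it suffices to exhibit one competitor; radial symmetry loses nothing here since any lower-dimensional slice estimate would only help, and the mollification to achieve $\eta\in C^1_0(B_\sigma)$ with $\eta=1$ on $B_\rho$ costs an arbitrarily small amount which can be absorbed, or avoided altogether by working with the closed form of the optimal $\phi$ and a routine approximation argument.
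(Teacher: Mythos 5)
Your proposal follows essentially the same route as the paper: reduce to a one-dimensional radial minimization, solve it explicitly via H\"older to obtain the value $\bigl(\int_\rho^\sigma m^{-1/(s-1)}\bigr)^{-(s-1)}$, and then apply the power-mean (Jensen) inequality to pass from the negative exponent $-1/(s-1)$ to the positive exponent $\delta$, which produces exactly the $(\sigma-\rho)^{-(s-1+1/\delta)}$ factor. The paper handles the possible degeneracy $\int_\rho^\sigma m^{-1/(s-1)}\,dr=\infty$ by replacing $m$ with $m+\varepsilon$ from the start and sending $\varepsilon\to 0$ at the very end, which is a cleaner implementation than your mollification/case-split but not a different idea.
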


\begin{proof}[Proof of Lemma~\ref{L:optimcutoff}]
Estimate \eqref{1dmin} follows directly by minimizing among radial symmetric cut-off functions. Indeed, we obviously have for every $\e\geq0$
\begin{equation*}
 J(\rho,\sigma,v)\leq \inf\left\{\int_{\rho}^\sigma |\eta'(r)|^s\left(\int_{S_r}|v|+\e\right)\,dr \;|\;\eta\in C^1(\rho,\sigma),\,\eta(\rho)=1,\,\eta(\sigma)=0\right\}=:J_{{\rm 1d},\e}.
\end{equation*}
For $\e>0$, the one-dimensional minimization problem $J_{{\rm 1d},\e}$ can be solved explicitly and we obtain
\begin{equation}\label{1dmin:2}
J_{{\rm 1d},\e}=\biggl(\int_{\rho}^\sigma \biggl(\int_{S_r}|v|+\e\biggr)^{-\frac1{s-1}}\,dr\biggr)^{-(s-1)}.
\end{equation}
Let us give an argument for \eqref{1dmin:2}. First we observe that using the assumption $v\in L^1(B_\sigma)$ and a simple approximation argument we can replace $\eta\in C^1(\rho,\sigma)$ with $\eta\in W^{1,\infty}(\rho,\sigma)$ in the definition of $J_{{\rm 1d},\e}$. Let $\widetilde\eta:[\rho,\sigma]\to[0,\infty)$ be given by
$$\widetilde\eta(r):=1-\biggl(\int_\rho^\sigma b(r)^{-\frac{1}{s-1}}\,dr\biggr)^{-1}\int_{\rho}^rb(r)^{-\frac{1}{s-1}}\,dr,\quad\mbox{where $b(r):=\int_{S_r}|v|+\e$}.$$
Clearly, $\widetilde \eta\in W^{1,\infty}(\rho,\sigma)$ (since $b\geq\e>0$), $\widetilde \eta(\rho)=1$, $\widetilde \eta(\sigma)=0$, and thus
\begin{equation*}
J_{{\rm 1d},\e}\leq\int_{\rho}^\sigma |\widetilde\eta'(r)|^sb(r)\,dr=\biggl(\int_{\rho}^\sigma b(r)^{-\frac{1}{s-1}}\,dr\biggr)^{-(s-1)}.
\end{equation*}
The reverse inequality follows by H\"older's inequality: For every $\eta\in W^{1,\infty}(\rho,\sigma)$ satisfying $\eta(\rho)=1$ and $\eta(\sigma)=0$, we have 
\begin{equation*}
1=\left(\int_\rho^\sigma \eta'(r)\,dr\right)^s\leq \int_{\rho}^\sigma|\eta'(r)|^sb(r)\,dr\biggl(\int_{\rho}^\sigma b(r)^{-\frac1{s-1}}\,dr\biggr)^{s-1}.
\end{equation*}
Clearly, the last two displayed formulas imply \eqref{1dmin:2}. 

Due to the monotonicity of $(-\infty,\infty)\ni m\mapsto (\fint_{\rho}^\sigma v^m(r)\,dr)^\frac1m$, we deduce from \eqref{1dmin:2} for every $\delta>0$
\begin{equation*}
J_{{\rm 1d},\e}\leq (\sigma-\rho)^{-(s-1+\frac1\delta)}\biggl(\int_{\rho}^\sigma \left(\int_{S_r}|v|+\e\right)^{\delta}\,dr\biggr)^{\frac{1}{\delta}}.
\end{equation*}
Sending $\e$ to zero, we obtain \eqref{1dmin}.
\end{proof}

In order to derive a suitable Cacciopolli type inequality in the proof of Theorem~\ref{T}, we make use of the so-called 'hole-filling' trick combined with the following useful (and well-kown) lemma 
\begin{lemma}[Lemma~6.1, \cite{Giu}]\label{L:holefilling}
Let $Z(t)$ be a bounded non-negative function in the interval $[\rho,\sigma]$. Assume that for every $\rho\leq s<t\leq \sigma$ it holds
\begin{equation*}
 Z(s)\leq \theta Z(t)+(t-s)^{-\alpha} A+B,
\end{equation*}
with $A,B\geq0$, $\alpha>0$ and $\theta\in[0,1)$. Then, there exists $c=c(\alpha,\theta)\in[1,\infty)$ such that
\begin{equation*}
 Z(s)\leq c((t-s)^{-\alpha} A+B).
\end{equation*}
\end{lemma}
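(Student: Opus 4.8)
The statement to prove is the iteration Lemma~\ref{L:holefilling} (Giusti's Lemma~6.1). The plan is to use the standard trick of iterating the inequality along a geometric sequence of radii and absorbing the accumulated error using the smallness of $\theta$.

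First I would fix $\rho \leq s < t \leq \sigma$ and introduce an auxiliary parameter $\tau \in (0,1)$ to be chosen, together with the increasing sequence of radii $t_0 := s$ and $t_{i+1} := t_i + (1-\tau)\tau^i (t-s)$, so that $t_i \uparrow t$ as $i \to \infty$ and $t_{i+1} - t_i = (1-\tau)\tau^i(t-s)$. Applying the hypothesis on the pair $(t_i, t_{i+1})$ gives
\begin{equation*}
 Z(t_i) \leq \theta Z(t_{i+1}) + (1-\tau)^{-\alpha}\tau^{-i\alpha}(t-s)^{-\alpha} A + B.
\end{equation*}
Iterating this bound $k$ times, one obtains
\begin{equation*}
 Z(t_0) \leq \theta^k Z(t_k) + (1-\tau)^{-\alpha}(t-s)^{-\alpha} A \sum_{i=0}^{k-1} (\theta \tau^{-\alpha})^i + B \sum_{i=0}^{k-1}\theta^i.
\end{equation*}
The key point is to choose $\tau = \tau(\alpha,\theta) \in (0,1)$ close enough to $1$ so that $\theta \tau^{-\alpha} < 1$; this is possible precisely because $\theta < 1$, since $\theta\tau^{-\alpha} \to \theta$ as $\tau \to 1$. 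With such a $\tau$ both geometric series converge as $k \to \infty$, and since $Z$ is bounded and $\theta^k \to 0$ the first term vanishes in the limit. We arrive at
\begin{equation*}
 Z(s) = Z(t_0) \leq \frac{(1-\tau)^{-\alpha}}{1 - \theta\tau^{-\alpha}}(t-s)^{-\alpha} A + \frac{1}{1-\theta} B,
\end{equation*}
which is the claimed estimate with $c = c(\alpha,\theta) := \max\left\{ (1-\tau)^{-\alpha}(1-\theta\tau^{-\alpha})^{-1},\, (1-\theta)^{-1} \right\}$, a constant depending only on $\alpha$ and $\theta$ through the choice of $\tau$.

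The only genuinely delicate point—and it is a mild one—is ensuring the construction is legitimate: one needs $Z$ bounded so that $\theta^k Z(t_k)$ really tends to $0$ (the hypothesis supplies exactly this), and one must verify that all the $t_i$ lie in $[\rho,\sigma]$, which holds since $t_i \in [s,t] \subset [\rho,\sigma]$ by construction. No convexity, measure theory, or PDE input is needed; this is a purely elementary real-variable iteration, and the proof is essentially the one given in \cite{Giu}. I would simply reproduce it in the three displays above with the explicit choice of $\tau$ made quantitative if desired, e.g. any $\tau$ with $\tau^\alpha > \theta$ works.
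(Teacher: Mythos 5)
Your proof is correct and is precisely the classical argument from the cited reference \cite[Lemma~6.1]{Giu}: iterate along the geometric subdivision $t_{i+1}=t_i+(1-\tau)\tau^i(t-s)$, choose $\tau$ with $\theta\tau^{-\alpha}<1$ to make the geometric series converge, and use boundedness of $Z$ to kill the $\theta^k Z(t_k)$ term. The paper itself states the lemma without proof, deferring to \cite{Giu}, so there is no alternative approach to compare against.
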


\subsection{Non-increasing rearrangement and Lorentz-spaces}

We recall the definition and useful properties of the non-increasing rearrangement $f^*$ of a measurable function $f$ and Lorentz spaces, see e.g.\ \cite[Section~22]{TatarBook}. For a measurable function $f:\R^n\to\R$, the non-increasing rearrangement is defined by
\begin{equation*}
 f^*(t):=\inf\{\sigma\in(0,\infty)\,:\,|\{x\in\R^n\,:\,|f(x)|>\sigma\}|\leq t\}.
\end{equation*}
Let $f:\R^n\to\R$ be a measurable function with ${\rm supp}f\subset \Omega$, then it holds for all $p\in[1,\infty)$
\begin{equation}\label{eq:ff*}
\int_\Omega |f(x)|^p\,dx=\int_0^{|\Omega|}(f^*(t))^p\,dt.
\end{equation}
A simple consequence of \eqref{eq:ff*} and the fact $f\leq g$ implies $f^*\leq g^*$ is the following inequality
\begin{equation}\label{est:omegat}
\sup_{|A|\leq t\atop A\subset\Omega}\int_A|f(x)|^p\leq \int_0^t(f_\Omega^*(t))^p\,dt,
\end{equation}
where $f_\Omega^*$ denotes the non-increasing rearrangement of $f\chi_\Omega$ (inequality \eqref{est:omegat} is in fact an \textit{equality} but for our purpose the upper bound suffices). 

The Lorentz space $L^{n,1}(\R^d)$ can be defined as the space of measurable functions $f:\R^d\to\R$ satisfying
\begin{equation*}
 \|f\|_{L^{n,1}(\R^d)}:=\int_0^\infty t^\frac1n f^*(t)\,\frac{dt}t<\infty.
\end{equation*}
Moreover, for $\Omega\subset\R^d$ and a measurable function $f:\R^d\to\R$, we set
\begin{equation*}
 \|f\|_{L^{n,1}(\Omega)}:=\int_0^{|\Omega|} t^\frac1n f_\Omega^*(t)\,\frac{dt}t<\infty,
\end{equation*}
where $f_\Omega$ defined as above. Let us recall that $L^{n+\e}(\Omega)\subset L^{n,1}(\Omega)\subset L^n(\Omega)$ for every $\e>0$, where $L^{n,1}(\Omega)$ is the space of all measurable functions $f:\Omega\to\R$ satisfying $ \|f\|_{L^{n,1}(\Omega)}<\infty$ (here we identify $f$ with its extension by zero to $\R^n\setminus \Omega$). Following \cite[Section 9]{Kufner}, we define for given $\alpha>0$ the Lorentz-Zygmund space $L^{n,1}(\log L)^\alpha(\R^d)$ as the space of all measurable functions $f:\R^d\to\R$ satisfying%
\begin{equation*}
 \|f\|_{L^{n,1}(\log L)^\alpha(\R^d)}:=\int_0^\infty t^\frac1n (1+|\log(t)|)^\alpha f^*(t)\,\frac{dt}t<\infty.
\end{equation*}
As above, for $\Omega\subset\R^d$ and a measurable function $f:\R^d\to\R$, we set
\begin{equation*}
 \|f\|_{L^{n,1}(\log L)^\alpha(\Omega)}:=\int_0^{|\Omega|} t^\frac1n (1+|\log(t)|)^\alpha f_\Omega^*(t)\,\frac{dt}t<\infty,
\end{equation*}
and denote by $L^{n,1}(\log L)^\alpha(\Omega)$ the space of all measurable functions $f:\Omega\to\R$ satisfying $\|f\|_{L^{n,1}(\log L)^\alpha(\Omega)}<\infty$. Obviously, we have for every bounded domain $\Omega$ that $L^{n+\e}(\Omega)\subset L^{n,1}(\log L)^\alpha(\Omega)\subset L^{n,1}(\Omega)$ for every $\e>0$.

\color{black}

\section{Proof of Theorem~\ref{T}}\label{sec:proof}

In this section, we provide a proof of Theorem~\ref{T}. As mentioned in the introduction, we establish a slightly stronger statement where the growth condition \eqref{ass1} is relaxed in order to introduce a right-hand side (see Remark~\ref{rem:g} below). 
\begin{assumption}\label{assgeneral} Let $f:\Omega\times \R^n\to\R$ be a  Caratheodory function and suppose that $z\mapsto f(x,z)$ is convex for almost every $x\in\Omega$. Moreover, there exist $1\leq L<\infty$, $\mu\geq0$ such that for all $z\in \R^n$ and almost every $x\in\Omega$
\begin{align}
|z|^p-g(x)^{\frac{p}{p-1}}\leq f(x,z)\leq L|z|^q+g(x)^\frac{p}{p-1},&\label{ass1gen}\\
f(x,2z)\leq \mu+Lf(x,z)),\label{ass2gen}
\end{align}
where $g$ is a non-negative function satisfying $g\in L^{\frac{p}{p-1}}(\Omega)$.
\end{assumption}

In order to state an a priori estimate it is convenient to introduce suitable scale invariant versions of Soblev and $L^p$ norms. For any bounded domain $\Omega\subset\R^n$, we set
\begin{equation*}
 \|v\|_{\underline W^{1,p}(\Omega)}:=|\Omega|^{-\frac1n}\|v\|_{\underline L^p(\Omega)}+\|\nabla v\|_{\underline L^p(\Omega)},
\end{equation*}
where
\begin{equation*}
\|v\|_{\underline L^p(\Omega)}:=|\Omega|^{-\frac1p}\|v\|_{L^p(\Omega)}.
\end{equation*}
Note that by definition of  $\|\cdot\|_{\underline W^{1,p}(\Omega)}$, it holds
\begin{equation}\label{eq:rescaling}
\forall v\in W^{1,p}(B_R),\,R>0:\qquad \|v\|_{\underline W^{1,p}(B_R)}= \|v_R\|_{\underline W^{1,p}(B_1)}\quad\mbox{where $v_R:=\frac1Rv(R\cdot)\in W^{1,p}(B_1)$.}
\end{equation}
\begin{theorem}\label{T1}
Let $\Omega\subset \R^n$, $n\geq2$ and suppose Assumption~\ref{assgeneral} is satisfied with $1\leq p<q<\infty$ satisfying
\begin{equation}\label{eq:assq}
 \e:=\e(n,p,q):=\min\biggl\{\frac1q+\frac1{n-1},1\biggr\}-\frac1p\geq0,
\end{equation}
and suppose that 
\begin{equation*}
 g^\frac1{p-1}\in L^{n,1}(\Omega)\quad\mbox{if $p<n$ and}\qquad g^\frac1{n-1}\in L^{n,1}(\log L)^\frac{n-1}{n}(\Omega)\quad\mbox{if $p=n$}.
\end{equation*}
\color{black}Let $u\in W_{\rm loc}^{1,1}(\Omega)$ be a quasi-minimizer of the functional $\mathcal F$ given in \eqref{eq:int}. Then, $u\in L_{\rm loc}^{\infty}(\Omega)$. Moreover, if 
\begin{equation}\label{eq:assq1}
 \e(n,p,q)>0\quad\mbox{and}\quad 1\leq p<n,
\end{equation}
there exists $c=c(L,n,p,q,Q)\in[1,\infty)$ such that every $Q$-minimizer of \eqref{eq:int} satisfies for every $x_0\in \Omega$ with $B_R:=B_R(x_0)\Subset\Omega$ the estimate
\begin{align}\label{est:T1}
\|u\|_{L^\infty(B_\frac{R}2)}\leq c(R\|u\|_{\underline W^{1,p}(B_R)}+R\|u\|_{\underline W^{1,p}(B_R)}^{1+\frac1\e(\frac1p-\frac1n)(1-\frac{p}q)}+\|g^\frac{1}{p-1}\|_{L^{n,1}(B_R)}).
\end{align}
\end{theorem}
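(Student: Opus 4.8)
The plan is to run a De Giorgi type iteration on super-level sets of $u$, but with the Caccioppoli inequality sharpened using Lemma~\ref{L:optimcutoff} so that the "loss" in the iteration is dictated by a Sobolev inequality on $(n-1)$-spheres rather than on $n$-balls. First I would fix $B_R=B_R(x_0)\Subset\Omega$, and for $k\geq0$ and radii $\frac R2\leq\rho<\sigma\leq R$ set $A(k,r):=\{x\in B_r: u(x)>k\}$ and work with the truncation $(u-k)_+$. The starting point is a Caccioppoli estimate: testing the $Q$-minimality with $\varphi=-\eta^q(u-k)_+$ for a cutoff $\eta$ which is $1$ on $B_\rho$, supported in $B_\sigma$, and using \eqref{ass1gen}, \eqref{ass2gen} and convexity, one obtains (after a hole-filling step via Lemma~\ref{L:holefilling}) a bound of the schematic form
\begin{equation*}
\int_{A(k,\rho)}|\nabla u|^p\,dx\lesssim \int_{A(k,\sigma)}(u-k)_+^q|\nabla\eta|^q\,dx+\Big(1+\|g^{\frac1{p-1}}\|^{\cdots}\Big)|A(k,\sigma)|^{1-\frac q{p}\cdot(\cdots)}.
\end{equation*}
The novelty is that the cutoff term is not estimated by $(\sigma-\rho)^{-q}$ times a full $n$-dimensional integral; instead we invoke Lemma~\ref{L:optimcutoff} with $v=(u-k)_+^q$, $s=q$, and a suitable $\delta\in(0,1]$, which replaces $\int_{A(k,\sigma)}(u-k)_+^q|\nabla\eta|^q$ by $(\sigma-\rho)^{-(q-1+\frac1\delta)}\big(\int_\rho^\sigma(\int_{S_r}(u-k)_+^q)^\delta dr\big)^{1/\delta}$, i.e. a quantity controlled by the behaviour of $u$ on spheres.

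Next I would convert this into a genuine reverse-Poincar\'e/De Giorgi inequality for the excess mass. Using the Sobolev (or Gagliardo--Nirenberg) inequality on the sphere $S_r\cong S^{n-1}$ of radius $r\sim R$, the $L^q$-norm of $(u-k)_+$ on $S_r$ is controlled by the $W^{1,p}(S_r)$-norm precisely when $\frac1q\geq\frac1p-\frac1{n-1}$, which is exactly hypothesis \eqref{eq:assq}; when $\frac1p-\frac1{n-1}\leq0$ one uses instead the embedding into $L^\infty$ or into every $L^q$ (this is where the $\min\{\cdot,1\}$ and, in the borderline $p=n$ case, the logarithmic Lorentz--Zygmund correction enters). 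Integrating in $r\in(\rho,\sigma)$, combining with the Caccioppoli bound above and the coarea/slicing identity $\int_{B_\sigma}|\nabla u|^p=\int_0^\sigma\int_{S_r}|\nabla u|^p\,dr$, one arrives at an inequality of the form
\begin{equation*}
\Big(\int_{A(k,\rho)}(u-k)_+^{\chi p}\Big)^{1/\chi}\leq \frac{c}{(\sigma-\rho)^{\gamma}}\Big(\int_{A(k,\sigma)}(u-k)_+^{p}+\big(1+\|g\|\big)|A(k,\sigma)|^{1+\kappa}\Big),
\end{equation*}
with a gain exponent $\chi>1$ coming from the $(n-1)$-dimensional Sobolev embedding, and with $\gamma$, $\kappa$ explicit in $n,p,q$. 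This is precisely the structure to which the standard De Giorgi lemma (e.g. \cite[Lemma~7.1]{Giu}) applies.

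Finally I would close the argument by the classical iteration: set $k_j=k_*(2-2^{-j})$, $\rho_j=\frac R2(1+2^{-j})$, $Y_j=\int_{A(k_j,\rho_j)}(u-k_j)_+^{p}$ (suitably normalized by the scale-invariant norms in \eqref{eq:rescaling}), and show $Y_{j+1}\leq C b^j Y_j^{1+\beta}$ for some $\beta>0$; then $Y_j\to0$ provided $Y_0\leq C^{-1/\beta}b^{-1/\beta^2}$, which can be arranged by choosing $k_*$ comparable to $R\|u\|_{\underline W^{1,p}(B_R)}+\big(R\|u\|_{\underline W^{1,p}(B_R)}\big)^{1+\frac1\e(\frac1p-\frac1n)(1-\frac qp)}+\|g^{\frac1{p-1}}\|_{L^{n,1}(B_R)}$, giving $\sup_{B_{R/2}}u\leq 2k_*$ and hence \eqref{est:T1}; applying the same to $-u$ yields the two-sided bound, and in the borderline cases $\e=0$ or $p=n$ the same scheme (with the Lorentz--Zygmund hypothesis on $g$) gives local boundedness without the quantitative estimate. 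The main obstacle, and the step requiring the most care, is the first one: deriving the improved Caccioppoli inequality in which the cutoff gradient term is genuinely replaced by spherical integrals of $u$ — one must handle the $(p,q)$-mismatch (the test function $(u-k)_+$ only lies in $W^{1,p}$ while $f$ grows like $|z|^q$) through the doubling property \eqref{ass2gen} and convexity, control the right-hand side term involving $g$ in the correct Lorentz(-Zygmund) scale, and track all exponents so that the final gain $\chi>1$ and the admissible range \eqref{eq:assq} match optimally; the rest is a by-now standard, if delicate, bookkeeping of the De Giorgi iteration.
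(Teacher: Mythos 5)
Your overall strategy — optimize the cutoff via Lemma~\ref{L:optimcutoff}, slice via the Sobolev inequality on $(n-1)$-spheres, then run a De~Giorgi-type iteration — is the paper's strategy, and your Step~1 is essentially correct: the Caccioppoli estimate \eqref{est:substep11} is obtained exactly as you sketch, where \eqref{est:infcutoff} comes from Lemma~\ref{L:optimcutoff} with $\delta=q_*/q$, the spherical Sobolev inequality \eqref{ineq:sobd3} with $\frac1{q_*}=\min\{\frac1q+\frac1{n-1},1\}$, and then H\"older to pass from $W^{1,q_*}$ to $W^{1,p}$ while picking up the crucial factor $|A_{k,\sigma}|^{q\e}$.

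The gap is in the iteration. You propose the classical geometric levels $k_j=k_*(2-2^{-j})$ together with the standard De~Giorgi lemma $Y_{j+1}\leq C b^j Y_j^{1+\beta}$, but this cannot produce the sharp Lorentz-space dependence on $g$. With fixed geometric increments, the $g$-contribution is $\int_{A_{k_j}} g^{p/(p-1)}\leq\omega(|A_{k_j}|)$, and using only the bound $\omega(t)\lesssim t^{1-p/n}$ (which already holds for $g^{1/(p-1)}\in L^{n,\infty}\supset L^{n,1}$) together with \eqref{est:Aksigma}, one finds $\omega(|A_{k_j}|)\lesssim Y_j\cdot 2^{jp}/k_*^p$ — \emph{linear} in $Y_j$ with a geometrically growing prefactor. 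This is not of the form $Y_j^{1+\beta}$, so the standard De~Giorgi lemma does not close; and indeed, by Remark~\ref{rem:g}, local boundedness genuinely fails for $g^{1/(p-1)}\in L^{n,1+\delta}\setminus L^{n,1}$, so no refinement of this crude estimate can work. To hit $L^{n,1}$ sharply, the paper instead chooses the increments $\Delta_\ell=k_\ell-k_0$ \emph{adaptively}: $\Delta_\ell$ is the smallest number making each of the three terms on the right of \eqref{est:iteration} at most $\tfrac13\tau^\ell J_0$, so that $J_\ell\leq\tau^\ell J_0$ follows trivially and the whole burden shifts to proving $\sum_\ell\Delta_\ell<\infty$; this sum is then converted to an integral in \eqref{sum:omega1}--\eqref{sum:omega2}, which evaluates to precisely $\|g^{1/(p-1)}\|_{L^{n,1}(B_1)}$. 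The same device handles $\e=0$ (taking $k_0$ large so $J_0$ is small) and, with Moser--Trudinger replacing the Sobolev bound \eqref{est:Aksigma}, the case $p=n$, where the conversion to an integral explains why the Lorentz--Zygmund space $L^{n,1}(\log L)^{(n-1)/n}$ appears.

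Two smaller points: in your final choice of $k_*$ the exponent should read $1+\tfrac1\e(\tfrac1p-\tfrac1n)(1-\tfrac pq)$, not $(1-\tfrac qp)$, which is negative; and the iteration gain exponent comes from the $n$-dimensional Sobolev embedding in \eqref{est:Aksigma}, whereas the $(n-1)$-dimensional embedding's role is only to enlarge the admissible range of $q$ in the Caccioppoli step — attributing the gain $\chi>1$ to the sphere embedding conflates these two distinct uses.
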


\begin{remark}\label{rem:g}
As mentioned above, Theorem~\ref{T1} is optimal with respect to the relation between the exponents $p$ and $q$. Moreover, it is also optimal with respect to the assumption on $g$ (at least for $p<n$). Indeed, for $p>1$ consider 
\begin{equation}\label{f:laplaceG}
 f(x,z):= \tfrac{p+1}p |z|^p+G\cdot z,
\end{equation} 
where $G\in L^\frac{p}{p-1}(\Omega,\R^n)$. Clearly $f$ satisfies Assumption~\ref{assgeneral} with $1<p=q$, $g=\frac{p-1}p|G|$ and $L=\frac{p+2}{p}$. Note that $u\in W_{\rm loc}^{1,1}(\Omega)$ is a local minimizer of the functional $\mathcal F$ given in \eqref{eq:int} and $f$ given as in \eqref{f:laplaceG} if and only if it solves locally
\begin{equation}\label{eq:rem:g}
-\Delta_p u:=-{\rm div}(|\nabla u|^{p-2}\nabla u)=\tfrac1{p+1}{\rm div}\, G.
\end{equation}
Hence, Theorem~\ref{T1} yields local boundedness for every weak solution of \eqref{eq:rem:g} provided $|G|^\frac1{p-1}\in L^{n,1}(\Omega)$. On the contrary, the (unbounded) function $u(x)=\log(-\log(|x|))$ solves (trivially) \eqref{eq:rem:g} on $B_\frac12$ with right-hand side $G=-(p+1)|\nabla u|^{p-2}\nabla u$ 
satisfying $|G|^\frac1{p-1}=c(p)|\nabla u|$ and thus $|G|^\frac1{p-1}\in L^{n,1+\delta}(B_\frac12)$ for every $\delta>0$ (in particular $|G|^\frac1{p-1}\in L^{n,n}(B_\frac12)=L^n(B_\frac12)$) but $|G|^\frac1{p-1}\notin L^{n,1}(B_\frac12)$.

In the interesting recent paper \cite{BM18}, a related result is proven on the Lipschitz-scale. More precisely it is proven that local minimizer of $\int_\Omega f(\nabla u)-gu\,dx$ are locally Lipschitz  if $f$ satisfies (controlled) $(p,q)$-growth i.e.\ 
\begin{equation}\label{pq:growthd2}
(1+|z|^2)^{\frac{p-2}{2}}|\lambda|^2\lesssim \langle D^2 f(z)\lambda,\lambda\rangle\lesssim (1+|z|^2)^\frac{q-2}2|\lambda|^2
\end{equation}
with $\frac{q}{p}<1+\frac2n$ and $g$ is in the optimal Lorentz space $L^{n,1}(\Omega)$ (provided $n\geq3$). Very recently, Lipschitz-regularity of minimizers for integrands satisfying \eqref{pq:growthd2} is proven in \cite{BS19c} under the less restrictive relation $\frac{q}{p}<1+\frac2{n-1}$ in the case $g\equiv0$. It would interesting if the methods of \cite{BS19c} and \cite{BM18} can be combined to obtain Lipschitz estimates under the assumption $\frac{q}p<1+\frac2{n-1}$ and $g\in L^{n,1}$ provided $n\geq3$. 
\end{remark}

\begin{proof}[Proof of Theorem~\ref{T1}]

By standard scaling and translation arguments it suffices to suppose that $B_1\Subset \Omega$ and prove that $u$ is locally bounded in $B_\frac12$. Hence, we suppose from now on that $B_1\Subset\Omega$. In Step~1--Step~3 below, we consider the case $p\in[1,n)$. We first derive a suitable Caccioppoli-type inequality (Step~1) and perform a De Giorgi-type iteration (Step~2 and 3) to prove boundedness from above for a $Q$-miniminzer. In Step~4, we discuss how this implies the claim of the theorem in the case $p\in[1,n)$. In Step~5, we sketch the adjustments for the remaining non-trivial case $p=n$.

\step 1 Basic energy estimate.

We claim that there exists $c=c(n,p,q,Q)\in[1,\infty)$ such that for every $k\geq0$ and every $\frac12\leq \rho<\sigma\leq1$ it holds
\begin{align}\label{est:substep11}
 \|\nabla (u-k)_+\|_{L^p(B_\rho)}^p\leq& c\biggl(\omega(|A_{k,\sigma}|)+\frac{L|A_{k,\sigma}|^{q\e}}{(\sigma-\rho)^{\gamma}} \|(u-k)_+\|_{W^{1,p}(B_\sigma)}^q\biggr),
\end{align}
where $\gamma=\gamma(n,q):=q-1+q\min\{\frac1q+\frac1{n-1},1\}$, $\e$ as in \eqref{eq:assq}, 
\begin{equation}
A_{l,r}:=B_r\cap \{x\in\Omega\, :\,u(x)>l\}\qquad\mbox{for all $r>0$ and $l>0$,}
\end{equation}
and $\omega:[0,|B_1|]\to[0,\infty)$ is defined by
\begin{equation}\label{def:omegat}
 \omega(t):=\int_0^t ((g^\frac1{p-1}\chi_{B_1})^*(t))^p\,dt.
\end{equation}
%

%
Fix $M>k$ and let $\eta\in C_c^1(B_1,[0,1])$ be such that $\eta=1$ in $B_\rho$ and $\supp \eta\subset B_\sigma$. Define $w:=\max\{u_M-k,0\}$ where $u_M:=\min\{u,M\}$ and set $\varphi:=-\eta^qw$. Since $u$ is a quasi-minimizer, we obtain with help of  convexity of $f$ that 
\begin{align*}
\int_{A_{k,\sigma}}f(x,\nabla u(x))\,dx\leq& Q\int_{A_{k,\sigma}}f(x,\nabla (u+\varphi)(x))\,dx\\
=&Q\int_{A_{k,\sigma}\cap \{u\leq M\}}f(x,(1-\eta^q)\nabla u-q\eta^{q-1}\nabla \eta(u_M-k)_+)\,dx\\
&+Q\int_{A_{k,\sigma}\cap \{u> M\}}f(x,\nabla u+q\eta^{q-1}\nabla \eta(-(u_M-k)_+)\,dx\\
\leq&Q\int_{A_{k,\sigma}\cap \{u\leq M\}}(1-\eta^q)f(x,\nabla u)+\eta^qf(x,-\frac{q\nabla \eta}{\eta}(u_M-k)_+)\,dx\\
&+\frac{Q}2\int_{A_{k,\sigma}\cap \{u> M\}}f(x,2\nabla u)+f(x,-2q\eta^{q-1}\nabla \eta(u_M-k)_+)\,dx
\end{align*}
and thus, using \eqref{ass1gen}, \eqref{ass2gen} and $|\eta|\leq1$,
\begin{align}\label{est:cacc1}
\int_{A_{k,\sigma}}f(x,\nabla u(x))\,dx\leq&Q\int_{A_{k,\sigma}\setminus B_\rho}f(x,\nabla u)\,dx+\frac{Q}2\int_{A_{k,\sigma}\cap \{u> M\}}\mu+Lf(x,\nabla u)\,dx\notag\\
&+ Q \int_{A_{k,\sigma}}g^\frac{p}{p-1}+Lq^q(1+2^{q})|\nabla \eta|^q|(u_M-k)_+|^q\,dx.
\end{align}
We claim that there exists $c=c(n,q)\in[1,\infty)$ such that
\begin{align}\label{est:infcutoff}
 \inf_{\eta\in \mathcal A(\rho,\sigma)}\int_{A_{k,\sigma}}|\nabla \eta|^q|(u_M-k)_+|^q\leq& c(\sigma-\rho)^{-\gamma}\|(u-k)_+\|_{W^{1,p}(B_\sigma\setminus B_\rho)}^q|A_{k,\sigma}|^{q\e},
 \end{align}
 where $\mathcal A(\rho,\sigma):=\{\eta\in C_c^1(B_\sigma),\, \eta\equiv1\mbox{ on $B_\rho$}\}$. To show \eqref{est:infcutoff}, we use the Sobolev inequality on spheres, i.e\  there exists $c=c(n,q)\in[1,\infty)$ such that for every $r>0$
\begin{equation}\label{ineq:sobd3}
\biggl(\int_{S_r}|\varphi|^{q}\biggr)^\frac1{q}\leq c\biggl(\biggl(\int_{S_r}|\nabla \varphi|^{q_*}\biggr)^\frac1{q_*}+\frac1r\biggl(\int_{S_r}|\varphi|^{q_*}\biggr)^\frac1{q_*}\biggr),
\end{equation}
where $q_*\geq1$ is given by $\frac1{q_*}=\min\{\frac1q+\frac1{n-1},1\}$. Combining \eqref{ineq:sobd3} applied to $\varphi=(u-k)_+$ and Lemma~\ref{L:optimcutoff} with $\delta:=\frac{q_*}q>0$ yield
\begin{align*}
 &\inf_{\eta\in \mathcal A(\rho,\sigma)}\int_{A_{k,\sigma}}|\nabla \eta|^q|(u_M-k)_+|^q\\
 \leq& (\sigma-\rho)^{-(q-1+\frac{q}{q_*})}\biggl(\int_{\rho}^\sigma \biggl(\int_{S_r}|(u-k)_+|^{q}\biggr)^\frac{q_*}{q}\biggr)^\frac{q}{q_*}\\
 \leq& c(\sigma-\rho)^{-\gamma}\biggl(\int_{\rho}^\sigma  \biggl[\left(\int_{S_r} |\nabla(u-k)_+|^{q_*}\right)+\left(\int_{S_r} |(u-k)_+|^{q_*}\right)\biggr]\,dr\biggr)^\frac{q}{q_*}
 \end{align*}
(note that we  ignored the factor $\frac1r$ in \eqref{ineq:sobd3} in view of $\frac12\leq\rho<\sigma\leq1$). Finally, we observe that $\e\geq0$ implies that $q_*\leq p$ and we obtain with help of H\"older inequality
\begin{align*}
 \inf_{\eta\in \mathcal A(\rho,\sigma)}\int_{A_{k,\sigma}}|\nabla \eta|^q|(u_M-k)_+|^q\leq&c(\sigma-\rho)^{-\gamma}\|(u-k)_+\|_{W^{1,q_*}(B_\sigma\setminus B_\rho)}^q\\
 \leq& c(\sigma-\rho)^{-\gamma}\|(u-k)_+\|_{W^{1,p}(B_\sigma\setminus B_\rho)}^q|A_{k,\sigma}|^{q\e},
 \end{align*}
and \eqref{est:infcutoff} is proven.

Since \eqref{est:cacc1} is valid for all $\eta\in \mathcal A(\rho,\sigma)$, we deduce from \eqref{est:infcutoff}, \eqref{est:omegat} and $f(x,z)\geq -g(x)^\frac{p}{p-1}$
\begin{align}\label{est:cacc2}
\int_{A_{k,\rho}}f(x,\nabla u)\,dx \leq&Q\int_{A_{k,\sigma}\setminus B_\rho}f(x,\nabla u)\,dx+\frac{Q}2\int_{A_{k,\sigma}\cap \{u> M\}}\mu+Lf(x,\nabla u)\,dx\notag\\
&+ (Q+1)\omega(|A_{k,\sigma}|)+\frac{cLQ}{(\sigma-\rho)^{\gamma}}\|(u-k)_+\|_{W^{1,p}(B_\sigma)}^q|A_{k,\sigma}|^{q\e},
\end{align}
where $c=c(n,p,q)\in[1,\infty)$ and $\omega$ is defined in \eqref{def:omegat}. Since $u$ is a quasi-minimizer and we assume $B_1\Subset\Omega$, we have that $f(\cdot,\nabla u)\in L^1(B_1)$ and $u\in W^{1,1}(B_1)$. Thus, we can send $M\to\infty$ in \eqref{est:cacc2} and the second term on the right-hand side in \eqref{est:cacc2} vanishes. Hence, we obtain with help of the hole-filling trick (namely adding $Q\int_{A_{k,\rho}}f(x,\nabla u)\,dx $ to both sides of inequality \eqref{est:cacc2})
\begin{align*}
\int_{A_{k,\rho}}f(x,\nabla u(x))\,dx\leq \theta \int_{A_{k,\sigma}}f(x,\nabla u(x))\,dx+c\biggl(\omega(|A_{k,\sigma}|)+\frac{L|A_{k,\sigma}|^{q\e}}{(\sigma-\rho)^{\gamma}}\|(u-k)_+\|_{W^{1,p}(B_\sigma)}^q\biggr),
\end{align*}
with $\theta=\frac{Q}{Q+1}\in[0,1)$ and $c=c(n,p,q)\in[1,\infty)$. Estimate \eqref{est:substep11} follows by Lemma~\ref{L:holefilling} and \eqref{ass1gen}.

\step 2 One-step improvement.

We claim that there exist $c_1=c_1(n,p,q,Q)\in[1,\infty)$ and $c_2=c_2(n,p)\in[1,\infty)$ such that for every $k>h\geq 0$ and every $\frac12\leq \rho<\sigma<1$ it holds
\begin{align}\label{est:onestep}
J(k,\rho)\leq& c_1\biggl(\omega\biggl(\frac{c_2 J(h,\sigma)^\frac{p_n^*}p}{(k-h)^{p_n^*}}\biggr)+L\biggl(\frac{J(h,\sigma)^\frac1p}{k-h}\biggr)^{p_n^*q\e}\frac{J(h,\sigma)^\frac{q}p}{(\sigma-\rho)^{\gamma}}+\biggl(\frac{J(h,\sigma)^{\frac1p}}{(k-h)}\biggr)^{p_n^*\frac{p}n}J(h,\sigma)\biggr),
\end{align}
where $p_n^*:=\frac{p n}{n-p}$ and for any $l\geq0$ and $r>0$
\begin{equation*}
J(l,r):=\|(u-l)_+\|_{W^{1,p}(B_r)}^p.
\end{equation*}
Note that $k-h<u-h$ on $A_{k,r}$ for every $r>0$ and thus with help of Sobolev inequality
\begin{equation}\label{est:Aksigma}
|A_{k,\sigma}|\leq \int_{A_{k,\sigma}}\biggl(\frac{u(x)-h}{k-h}\biggr)^{p_n^*}\leq \frac{\|(u-h)_+\|_{L^{p_n^*}(B_\sigma)}^{p_n^*}}{(k-h)^{p_n^*}}\leq c \frac{J(h,\sigma)^\frac{p_n^*}p}{(k-h)^{p_n^*}},
\end{equation}
where $c=c(n,p)\in[1,\infty)$. Combining the above estimate with \eqref{est:substep11}, we obtain
\begin{equation}\label{est:onestep1}
\|\nabla (u-k)_+\|_{L^p(B_\rho)}^p\leq c_1\biggl(\omega\biggl(\frac{c_2 J(h,\sigma)^\frac{p_n^*}p}{(k-h)^{p_n^*}}\biggr)+L\biggl(\frac{J(h,\sigma)^\frac1p}{k-h}\biggr)^{p_n^*q\e}\frac{J(h,\sigma)^\frac{q}p}{(\sigma-\rho)^{\gamma}}\biggr),
\end{equation}
where $c_1=c_1(n,p,q,Q)\in[1,\infty)$ and $c_2=c_2(n,p)\in[1,\infty)$. It is left to estimate $\|(u-k)_+\|_{L^p(B_\rho)}$. A combination of H\"older inequality, Sobolev inequality and estimate \eqref{est:Aksigma} yield
\begin{align}\label{est:onestep2}
 \| (u-k)_+\|_{L^p(B_\rho)}^p\leq \|(u-h)_+\|_{L^{p_n^*}(B_\sigma)}^p|A_{k,\sigma}|^{\frac{p}{n}}\leq c \biggl(\frac{J(h,\sigma)^{\frac1p}}{(k-h)}\biggr)^{p_n^*\frac{p}n}J(h,\sigma)\biggr)
\end{align}
Combining \eqref{est:onestep1} and \eqref{est:onestep2}, we obtain \eqref{est:onestep}.

\step 3 Iteration.

For $k_0\geq0$ and a sequence $(\Delta_\ell)_{\ell\in\mathbb N}\subset [0,\infty)$ specified below, we set
\begin{equation}
 k_\ell:=k_0+\Delta_\ell,\quad \sigma_\ell=\frac12+\frac1{2^{\ell+1}}.
\end{equation}
For every $\ell\in\mathbb N\cup\{0\}$, we set $J_\ell:=J(k_\ell,\sigma_\ell)$. From \eqref{est:onestep}, we deduce for every $\ell\in\mathbb N$
\begin{align}\label{est:iteration}
J_\ell\leq c_1\biggl(\omega\biggl(\frac{c_2 J_{\ell-1}^\frac{p_n^*}p}{(\Delta_\ell)^{p_n^*}}\biggr)+L2^{(\ell+1)\gamma}\biggl(\frac{J_{\ell-1}^\frac1p}{\Delta_\ell}\biggr)^{p_n^*q\e}J_{\ell-1}^\frac{q}p+\biggl(\frac{J_{\ell-1}^{\frac1p}}{\Delta_\ell}\biggr)^{p_n^*\frac{p}n}J_{\ell-1}\biggr),
\end{align}
where $c_1$ and $c_2$ are as in Step~2. Fix $\tau=\tau(n,p,q)\in(0,1)$ such that
\begin{equation}\label{def:tau}
2^\gamma\tau^{\frac{q}p(1+p_n^{*}\e)-1}=\frac12.
\end{equation}
%
We claim that we can choose $\{\Delta_\ell\}_{\ell\in\mathbb N}$ satisfying
\begin{equation}\label{est:sumdeltaell}
\sum_{\ell\in\mathbb N}\Delta_\ell<\infty
\end{equation}
and $k_0$ (in the borderline case $\e=0$) in such a way that
\begin{equation}\label{ass:iterationJell}
 J_\ell\leq \tau^\ell J_0\qquad\mbox{for all $\ell\in\mathbb N\cup\{0\}$}.
\end{equation}

\substep{3.1} Suppose that $\e>0$. Set $k_0=0$ and choose $\Delta_\ell$ to be the smallest number such that
\begin{align}\label{def:deltaell1}
 c_1\omega\biggl(\frac{c_2 (\tau^{\ell-1}J_0)^\frac{p_n^*}p}{(\Delta_\ell)^{p_n^*}}\biggr)\leq \frac13\tau^\ell J_0,\qquad c_1\tau^{-(\frac{p_n^*}p+1)}J_0^{\frac{p_n^*}n}\tau^{\ell\frac{p_n}p}\leq \frac13\Delta_\ell^{p_n^*\frac{p}n}
\end{align}
and
\begin{equation}\label{def:deltaell2}
c_1L2^\gamma\tau^{-\frac{q}p(1+p_n^*\e)}J_0^{\frac{q}p(1+p_n^*\e)-1}2^{-\ell}\leq \frac13\Delta_\ell^{p_n^*q\e}
\end{equation}
is valid. The choice of $\tau$ (see \eqref{def:tau}), $\Delta_\ell$ and estimate \eqref{est:iteration} combined with a straightforward induction argument yield \eqref{ass:iterationJell}. Using $\sum_{\ell\in\mathbb N}(2^{-\alpha}+\tau^\beta)<\infty$ for any $\alpha,\beta>0$, we deduce from \eqref{def:deltaell1} and \eqref{def:deltaell2}
\begin{align}\label{sum:deltaell}
 \sum_{\ell\in\mathbb N}\Delta_\ell \leq \sum_{\ell\in\mathbb N}\frac{c_2^{\frac1p-\frac1n}(\tau^{\ell-1}J_0)^\frac1p)}{(\omega^{-1}(\frac{\tau^\ell J_0}{3c_1}))^{\frac1p-\frac1n}}+c (J_0^\frac1p+J_0^{\frac1p+\frac1p(\frac1p-\frac1n)(1-\frac{p}q)\frac1\e}),
\end{align}
where $c=c(L,n,p,q,Q)\in[1,\infty)$. Next, we show that $g^\frac1{p-1}\in L^{n,1}(B_1)$ ensures that the first term on the right-hand side of \eqref{sum:deltaell} is bounded and thus \eqref{est:sumdeltaell} is valid. Indeed,
\begin{align}\label{sum:omega1}
\sum_{\ell\in\mathbb N}\frac{(\tau^{\ell}J_0)^\frac1p)}{(\omega^{-1}(\frac{\tau^\ell J_0}{3c_1}))^{\frac1p-\frac1n}}\lesssim& \int_1^\infty \frac{(\tau^x J_0)^\frac1p}{(\omega^{-1}(\frac{\tau^xJ_0}{3c_1}))^{\frac1p-\frac1n}}\,dx\notag\\
=&\frac1{|\log \tau|}\int_0^\tau\frac{(tJ_0)^\frac1p}{(\omega^{-1}(\frac{tJ_0}{3c_1}))^{\frac1p-\frac1n}}\,\frac{dt}t\notag\\
\leq&\frac{(3c_1)^\frac1p}{|\log\tau|}\int_0^{\omega^{-1}({\frac{\tau J_0}{3c_1}})} \frac{( \omega(s))^\frac1p}{s^{\frac1p-\frac1n}}\frac{\omega'(s)}{\omega(s)}\,ds.
\end{align}
Recall $\omega(t)=\int_0^t((g^\frac1{p-1}\chi_{B_1})^*(s))^p\,ds$ and $(g^\frac1{p-1}\chi_{B_1})^*$ is non-increasing, thus $\omega(t)\geq t(g^\frac{1}{p-1}\chi_{B_1})^*(t)^p$ and 
\begin{align}\label{sum:omega2}
\int_0^{\omega^{-1}({\frac{\tau J_0}{3c_1}})} \frac{( \omega(s))^\frac1p}{s^{\frac1p-\frac1n}}\frac{\omega'(s)}{\omega(s)}\,ds\leq&\int_0^{\infty}s^{\frac1n-\frac1p}(s((g^\frac1{p-1}\chi_{B_1})^*(s))^p)^{-(1-\frac1p)}((g^\frac1{p-1}\chi_{B_1})^*(s))^p\,ds\notag\\
=&\int_0^{\infty}s^{\frac1n}(g^\frac1{p-1}\chi_{B_1})^*(s)\,\frac{ds}s=\|g^\frac1{p-1}\|_{L^{n,1}(B_1)}.
\end{align}
Notice that \eqref{ass:iterationJell} and $k_0=0$ implies
\begin{equation*}
\|(u-\sum_{\ell\in\mathbb N}\Delta_\ell)_+\|_{L^p(B_\frac12)}=0\quad \Rightarrow\quad \sup_{B_\frac12}u\leq \sum_{\ell\in\mathbb N}\Delta_\ell
\end{equation*}
and thus
$$
\sup_{B_\frac12}u\leq \sum_{\ell\in\mathbb N}\Delta_\ell.
$$
Hence, appealing to \eqref{sum:deltaell}-\eqref{sum:omega2}, we find $c=c(L,n,p,q,Q)\in[1,\infty)$ such that
\begin{equation}\label{est:supu}
 \sup_{B_\frac12}u\leq c(\|(u)_+\|_{W^{1,p}(B_1)}+\|(u)_+\|_{W^{1,p}(B_1)}^{1+(\frac1p-\frac1n)(1-\frac{p}q)\frac1\e}+\|g\|_{L^{n,1}(B_1)}).
\end{equation}
%

\substep{3.1} Suppose that $\e=0$. We claim that
\begin{equation}\label{lim:J00}
 \lim_{k_0\to\infty} J_0=0.
\end{equation}
Before, we give the argument for \eqref{lim:J00} we explain how \eqref{lim:J00} implies the desired claim \eqref{ass:iterationJell} in the case $\e=0$. Choose $\Delta_\ell$ to be the smallest number such that \eqref{def:deltaell1} is satisfied and choose $k_0$ sufficiently large such that 
\begin{equation*}
c_1L2^\gamma \tau^{-\frac{q}p}J_0^{\frac{q}p-1}\leq\frac13.
\end{equation*}
It is now easy to see that the choice of $\tau$, $\Delta_\ell$, $k_0$ and estimate \eqref{est:iteration} yield \eqref{ass:iterationJell}. In view of Substep~3.1 we also have $\sum_{\ell\in\mathbb N}\Delta_\ell<\infty$ and we have
\begin{equation*}
 \sup_{B_\frac12}u\leq k_0+c(\|(u)_+\|_{W^{1,p}(B_1)}+\|g\|_{L^{n,1}(B_1)})<\infty,
\end{equation*}
where $c=c(L,n,p,q,Q)\in[1,\infty)$.

Let us now show \eqref{lim:J00}. For $k\geq 2^\frac1p|B_1|^{-\frac1p}\|u\|_{L^p(B_1)}$ we have $|A_{k,1}|\leq\frac12 |B_1|$ and thus a suitable version of Poincare inequality (see e.g.\ \cite[Proposition~3.15]{GM12}) yields
\begin{equation*}
 \int_{B_1}|(u-k)_+|^p\,dx\leq c\int_{B_1}|\nabla (u-k)_+|^p\,dx,
\end{equation*}
where $c=c(n,p)\in[1,\infty)$. Hence, it suffices to show $\lim_{k\to\infty}\|\nabla (u-k)_+\|_{L^p(B_1)}^p=0$. By \eqref{ass1gen}, we have for every $k\geq0$
\begin{align}\label{lim:J001}
 \int_{B_1}|\nabla (u-k)_+|^p=\int_{A_{k,1}}|\nabla u|^p\leq \int_{A_{k,1}}f(x,\nabla u)+g^\frac{p}{p-1}(x)\,dx.
\end{align}
Since $B_1\Subset \Omega$ and $f(x,\nabla u),g^\frac{p}{p-1}\in L^1_{\rm loc}(\Omega)$, the right-hand side in \eqref{lim:J001} tends to zero as $k$ tends to infinity and thus \eqref{lim:J00} is proven.

\step 4 Conclusion in the case $p<n$.

In view of Step~1--Step~3, we have that if $B_1\Subset \Omega$ than $u$ is locally bounded from above in $B_\frac12$ and in the case $\e>0$, we have the estimate \eqref{est:supu}. Moreover, if $u$ is a $Q$-minimizer of $\mathcal F$, then $-u$ is a $Q$-minimizer of the functional $\widetilde{\mathcal F}(v):=\int_{\Omega} \tilde f(x,\nabla v(x))\,dx$ with $\tilde f(x,z):=f(x,-z)$. Clearly, $\tilde f$ is convex in the second component and satisfies the same growth conditions as $f$. Hence, we obtain that $u$ is locally bounded in $B_\frac12$. Moreover, if $\e>0$ there exists $c=c(L,n,p,q,Q)\in[1,\infty)$ such that
\begin{equation*}
 \|u\|_{L^\infty(B_\frac12)}\leq c(\|u\|_{W^{1,p}(B_1)}+\|u\|_{W^{1,p}(B_1)}^{1+(\frac1p-\frac1n)(1-\frac{p}q)\frac1\e}+\|g\|_{L^{n,1}(B_1)}).
\end{equation*}
The conclusion of the theorem in the case $p\in[1,n)$ now follows by standard scaling, translation and covering arguments (here we use \eqref{eq:rescaling}).

\step 5 The case $p=n$.

We use the same notation as in the previous steps and sketch the necessary adjustments. Note that for $p=n$ we cannot use Sobolev inequality in the form \eqref{est:Aksigma}. In the parts not involving $\omega$ it suffices to replace $p_n^*$ by any $\tilde p\in[q,\infty)$ (recall $q>p=n$) and we leave the details to the reader. Using this replacement for the estimates related to $\omega$, we obtain local boundedness under slightly stronger assumptions on $g$, namely $g^\frac1{n-1}\in L^{n+\delta}(\Omega)$ for some $\delta>0$ (in fact this statement is already contained in \cite{FS93}). Thus we may appeal to the Moser-Trudinger inequality, which gives for some dimensional constant $c>0$, $0\le h <k , \frac12<\sigma <1$
\begin{equation}\label{est:Aksigmapn}
|A_{k,\sigma}|\leq c\exp\biggl(-\frac1c\left(\frac{k-h}{J(h,\sigma)^\frac1n}\right)^{\frac{n}{n-1}}\biggr).
\end{equation}
Let us first conclude and present the the derivation of the above inequality below. 

In view of Step 3 and \eqref{est:Aksigmapn} it suffices to show that the sequence $\{\Delta_\ell\}_{\ell\in\mathbb N}$ defined by the identity
\[\omega\biggl(c\exp\biggl(-\frac1c\frac{\Delta^{\frac{n}{n-1}}_\ell}{(\tau^{\ell-1} J_0)^\frac{1}{n-1}}\biggr)\biggr)= \bar c\tau^\ell J_0,\]
for some $\bar c>0$ and $\tau\in(0,1)$ is summable. Indeed, we have
\begin{align*}
\sum_{\ell\in\mathbb N}\Delta_\ell\lesssim& \sum_{\ell\in\mathbb N}(\tau^{\ell-1} J_0)^\frac1n\left(1 +  |\log(\tfrac1{c}\omega^{-1}(\bar c\tau^\ell J_0))|\right)^{\frac{n-1}{n}}\\
\lesssim& \frac1{\tau^\frac1n|\log(\tau)|}\int_0^\tau(tJ_0)^\frac1n(1+|\log(\omega^{-1}(\bar ctJ_0))|)^{\frac{n-1}{n}}\,\frac{dt}t\\
=& \frac{1}{(\tau\bar c)^\frac1n|\log(\tau)|}\int_0^{\omega^{-1}(\bar c\tau J_0)}(\omega(s))^\frac1n(1+|\log(s)|)^{\frac{n-1}{n}}\frac{\omega'(s)}{\omega(s)}\,ds.
\end{align*}
Now we can continue as before, i.e. using $\omega(s)\geq s(g^\frac{1}{n-1}\chi_{B_1})^*(s)^n$ and $\omega'(s)=(g^\frac{1}{n-1}\chi_{B_1})^*(s)^n$, we obtain
\begin{align*}
\int_0^{\omega^{-1}(\bar c\tau J_0)}(\omega(s))^\frac1n(1+|\log(s)|)^{\frac{n-1}{n}}\frac{\omega'(s)}{\omega(s)}\,ds\leq&\int_0^{\infty}s^{\frac1n}(1+|\log(s)|)^{\frac{n-1}{n}}((g^\frac1{n-1}\chi_{B_1})^*(s))\,\frac{ds}s\notag\\
=&\|g^\frac1{n-1}\|_{L^{n,1}(\log L)^{\frac{n-1}{n}}(B_1)}<\infty.
\end{align*}
Finally, we present the argument for \eqref{est:Aksigmapn}. For this we recall the Moser-Trudinger inequality in the following form: there exists $c_i=c_i(n)>0$, $i=1,2$ such that for every ball $B\subset\R^d$ and every $v\in W^{1,n}(B)$
\begin{equation*}
 \fint_{B}\exp\biggl(\biggl(\frac{|v-\fint_Bv|}{c_1\|\nabla v\|_{L^n(B)}}\biggr)^\frac{n}{n-1}\biggr)\leq c_2
\end{equation*}
(see e.g.\ \cite[Chapter 7]{GT}).
Since 
\[A_{k, \sigma} \subset B_\sigma \cap \{ x \colon  (u-h)_+ \ge k-h\}=:E_{h,k,\sigma}\]
Chebychev's inequality combined with Moser-Trudinger inequality  gives \eqref{est:Aksigmapn}:
\begin{align*}
|E_{h,k,\sigma}|
\lesssim& \exp\biggl(-\left(\frac{k-h}{2^{\frac{n}{n-1}}c_1 J(h,\sigma)^\frac1n}\right)^\frac{n}{n-1}\biggr) \int_{B_\sigma} \exp\biggl(\biggl(\frac{(u-h)_+}{ 2^{\frac{n}{n-1}}c_1 J(h,\sigma)^\frac1n}\biggr)^\frac{n}{n-1}\biggr)\,dx\\
\leq&\exp\biggl(-\left(\frac{k-h}{2^{\frac{n}{n-1}}c_1 J(h,\sigma)^\frac1n}\right)^\frac{n}{n-1}\biggr) \exp\biggl(\biggl(\frac{\fint_{B_\sigma}(u-h)_+}{ c_1 J(h,\sigma)^\frac1n}\biggr)^\frac{n}{n-1}\biggr)c_2|B_\sigma|\\
\lesssim&\exp\biggl(-\left(\frac{k-h}{2^{\frac{n}{n-1}}c_1 J(h,\sigma)^\frac1n}\right)^\frac{n}{n-1}\biggr),
\end{align*}
where we use in the last estimate the Poincar\'e inequality the assumption $\sigma\leq1$.

\end{proof}

\section*{Acknowledgments}

M.S. was supported by the German Science Foundation DFG in context of the Emmy Noether Junior Research Group BE 5922/1-1. J.H. was supported by the German Science Foundation DFG in context of the Priority Program SPP 2026 "Geometry at Infinity".


\end{document}